\newtheorem{theorem}{Theorem}[section]
\newtheorem{proposition}{Proposition}[section]
\newtheorem{corollary}{Corollary}[section]
\theoremstyle{definition}
\newtheorem{definition}{Definition}[section]
\numberwithin{equation}{section}
\def\Z{\mathbb Z}
\theoremstyle{plain}
\newcommand{\R}{\mathbb{R}}
\newcommand{\N}{\mathbb{N}}
\newcommand{\flq}{\mathscr{P}l_{\omega}^q}
\newcommand{\flqone}{\mathscr{P}l_{\omega_1}^{q_1}}
\newcommand{\flqtwo}{\mathscr{P}l_{\omega_2}^{q_2}}
\newcommand{\flqloc}{\mathscr{P}l_{\omega, loc}^q}
\begin{document}

\title[Wave fronts via Fourier series coefficients]{Wave fronts via Fourier series coefficients}

\author[S. Maksimovi\'{c}]{Snjezana Maksimovi\'{c}}
\address{Faculty of Electrical Engineering, University of Banja Luka, Patre
5, 78000 Banja Luka, Bosnia and Herzegovina}
\email{snjezana.maksimovic@etfbl.net}

\author[S. Pilipiovi\'{c}]{Stevan Pilipovi\'{c}}
\thanks{This work was supported by the project \# 174024 of the Serbian Ministry of Education, Science and Technological Development.}
\address{Department of Mathematics and Informatics, University of Novi Sad, Trg Dositeja
Obradovica 4, 21000 Novi Sad, Serbia}
\email{pilipovic@dmi.uns.ac.rs}

\author[P. Sokoloski]{Petar Sokoloski}
\address{Faculty of Mathematics and Natural Sciences,  St. Cyril and Methodius University, Gazi Baba bb, 1000 Skopje, Macedonia}
\email{petar@pmf.ukim.mk}

\author[J. Vindas]{Jasson Vindas}
\thanks{J. Vindas gratefully acknowledges support by Ghent University, through the BOF-grant 01N01014.}
\address{Department of Mathematics, Ghent University, Krijgslaan 281 Gebouw S22,
9000 Gent, Belgium}%. Tel.: (+32) 09 2644957. Fax: (+32) 09 2644993

\email{jvindas@cage.ugent.be}

\begin{abstract}
Motivated by the product of periodic distributions, we give a new description of the wave front and the Sobolev-type wave front of a distribution $f\in\mathscr{D}'(\R^d)$ in terms of Fourier series coefficients.
\end{abstract}

\subjclass[2010]{Primary 	35A18. Secondary 46F10}
\keywords{Wave fronts, Fourier series, multiplication of distributions}

\maketitle

\section{Introduction}

In this article microlocal properties of a distribution $f$ at $x_0\in
\R^d$ are detected through the Fourier series expansion of periodizations of $\varphi f$, where $\varphi$ is a cut-off function near $x_0$. In contrast to \cite{JPTTo}, where weighted type wave front sets have been discussed 
by the use of Gabor and dual Gabor frames depending on an additional continuous parameter $\varepsilon\rightarrow 0$, we shall show that the classical Fourier basis can be used for microlocal analysis. Our approach leads to discretized definitions of wave fronts in terms of Fourier coefficients.  This is the main novelty of the paper, which also includes proofs for the equivalence between  these discretized defintions and H\"{o}rmander's approach.

The space of periodic distributions is one of the basic Schwartz spaces and has been studied  in many books and papers in the second half of the last century. We refer here just to few of them \cite{Schwartz,Walter, Skornik, AMS, Beals, Kanwal} (see \cite{Vindas} for applications in summability of Fourier series). 
In the context of our paper, we mention the recent article \cite{RT} and book  \cite{Ruzh}, where Ruzhansky and Turunen have studied generalized functions on a torus $\mathbb T^d$. Their interest there lies in pseudo-differential operators and microlocal analysis over $\mathbb T^d\times\Z^d$. On the other hand, at present time, H\" ormander's notion of the wave front set attracts a lot of interest among mathematicians and there exists a vast literature related to this basic notion and its important role in the qualitative analysis of PDE and $\Psi$DO. We mention the basic books of H\" ormander \cite{Hor,Hor2} as standard references for classical and Sobolev type wave front sets; the articles \cite{pnt1,JPTTo} deal with weighted type wave fronts, while \cite{CJT1,CJT2} study extended wave fronts by considering local and global versions with respect to various Banach and Fr\' echet spaces of functions over the configuration and the frequency domains.

It is well known that the product of two distributions can be defined if their wave fronts are in a ``good'' position with respect to each other. This motivates us to 
study the product and wave fronts via spaces of  periodic distributions. In Subsection \ref{multiplication} below we discuss an elementary approach to local multiplication based on Fourier series. The main results of this article are presented in Sections \ref{wavefront} and \ref{Sobolev wavefront}. In Theorem \ref{wfth} and Theorem \ref{sobwf}
we characterize the wave front and the Sobolev type wave front of a distribution $f\in\mathscr{D}'(\R^d)$ by estimates of Fourier coefficients of its localizations. It should be mentioned that toroidal wave fronts have been studied in \cite{RT,Ruzh} through Fourier series as in this paper, 
but our approach is quite different and is related to the H\"{o}rmander's wave fronts in a precise fashion. Let us note that Sobolev type wave fronts were not considered in \cite{RT,Ruzh}.

\subsection{Notation}
For $x=(x_1,\ldots , x_d)\in\R^d$, we write $|x|=\sqrt{x_1^2+\ldots+x_d^2}$ and $\langle x\rangle=(1+|x|^2)^{1/2}$. Let $0<\eta\leq1.$ We will use the notation
 $$I_{\eta,x}=\prod_{j=1}^{d}(x_j-\eta/2,x_j+\eta/2) \: \: \mbox{and }\: \: I_\eta:= I_{\eta,0}.$$ 
Throughout the article, the word \emph{periodic} always refers to functions or distributions on $\mathbb{R}^{d}$ which are periodic of period 1 in each variable, i.e., $f(x+n)=f(x)$, $x\in\R^d, n\in\N^d$. We also use the notation $e_{y}$ for $e_y(x)=e^{2\pi i y \cdot x}$, $y\in\mathbb{R}^{d}$. We will consider periodic extensions of localizations of distributions around a point $x_0\in\mathbb{R}^{d}$, so if a distribution $g$ is supported by $I_{\eta,x_{0}}$, with $0<\eta<1$, we shall write $g_{p}(x):= \sum_{n\in\mathbb{Z}^{d}} g(x+n)$ for its periodic extension.

\subsection{Basic spaces}
The space of periodic test functions $\mathscr{P}=\mathscr{P}(\R^d)$ consists of
smooth periodic functions; its topology is given via the sequence of norms  $\|\varphi\|_k=\sup_{x\in I_1, |\alpha|\leq k}|\varphi^{(\alpha)}(x)|$,  $k\in\N$. Obviously,
$\varphi\in \mathscr{P}$ if and only if 
$\sum_{n\in\Z^d}\left|\varphi_n\right|^2 \langle n\rangle^{2k}<\infty$ for every   $k\in\Z,$
where $\varphi_n=\int_{I_1}\varphi(x)e^{-2\pi i n \cdot x}dx=\langle \varphi, e_{-n}\rangle, n\in\Z^d$.
The dual space of $\mathscr P$, the space of perioic distributions, is denoted by $\mathscr{P}^\prime$. One has:
$ f=\sum_{n\in\Z^d}{f_n e_n} \in \mathscr{P}^\prime$ if and only if $
\sum_{n\in\Z^d} \left| f_n \right|^2   \left\langle n\right\rangle^{-2k_0}<\infty,\ \textrm{ for some }k_0>0.$ If $f=\sum_{n\in\Z^d}{f_n e_n}\in\mathscr{P}'$ and $\varphi=\sum_{n\in\Z^d}{\varphi_n e_n}\in\mathscr{P}$,
 then their dual pairing is given by $\left\langle f,\varphi\right\rangle=\sum_{n\in\Z^d}f_n \varphi_{-n}$.

Let $\nu,\omega$ be positive functions over $\Z^d$. We call $\omega$ a $\nu$-moderate weight if there is $C$ such that
\begin{equation}\label{moderate}
\omega(m+n) \leq C   \omega(m)   \nu(n),  \: \: \forall m,n\in\Z^d. 
\end{equation}
If we take $\nu$ to be a polynomial, we call $\omega$ polynomially moderate. The set of all polynomially moderate weights on $\Z^d$ will be denoted as $\mathcal{P}ol(\Z^d)$. For $\omega \in \mathcal{P}ol(\Z^d)$, we define 
$$\flq=\{f\in \mathscr{P}' : \{f_n \omega (n)\}_{n\in\Z^d} \in l^q, \mbox{ where }f_n=\langle f,e_{-n}\rangle \}$$
supplied with the norm 
$\|f\|_{\flq}=\|\{f_n\omega(n)\}\|_{l^q}.$ We consider from now only values of $q\geq1$.
Clearly, $\flqone \subseteq \flqtwo$, if $q_1 \leq q_2$ and $\omega_2 \leq C   \omega_1$.

We will also consider the local space $\flqloc$ consisting of  distributions $f \in \mathscr{D}'(\R^d)$ such that the periodic extensions $(\varphi f)_{p}\in \flq$, for all $x_{0}\in\mathbb{R}^{d}$ and $\varphi \in \mathscr{D}(I_{1,x_0})$. Its topology is defined via the family of seminorms
$\|f\|_{x_0,\varphi}=\| (\varphi f)_{p}\|_{\flq}$, where $x_{0}\in\mathbb{R}^{d}$ and $  \varphi \in \mathscr{D}(I_{1,x_{0}}).$ For the sake of completeness, we give the following elementary proposition (its proof also shows that the definition of $\flqloc$ is consistent).

\begin{proposition}\label{prop1}
$\flq\subset {\mathscr{P}l_{\omega,loc}^q}$.
\end{proposition}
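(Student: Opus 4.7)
The plan is to express the Fourier coefficients of the periodic extension $(\varphi f)_{p}$ as a discrete convolution of $\{f_{m}\}$ against the Fourier coefficients of $\varphi_{p}$, and then close the argument with a weighted Young-type inequality.

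As a preliminary step, I would note that for $\varphi \in \mathscr{D}(I_{1,x_{0}})$ the periodization $\varphi_{p}$ belongs to $\mathscr{P}$, and a direct change-of-variables computation (using $e^{2\pi i m\cdot k}=1$ for $m,k\in\Z^d$) identifies its Fourier coefficients with the usual Fourier transform of $\varphi$ evaluated at the integer lattice: $(\varphi_{p})_{m} = \widehat{\varphi}(m)$ for all $m\in\Z^d$. Moreover $\varphi f \in \mathscr{E}'(\R^d)$ is supported in the open unit cube $I_{1,x_{0}}$, so $(\varphi f)_{p}$ is a well-defined element of $\mathscr{P}'$.

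Next, to compute its Fourier coefficients $c_{n}$, I would use that for a periodic distribution $f$ and any test function $\psi \in \mathscr{D}(\R^d)$ one has $\langle f, \psi\rangle_{\mathscr{D}',\mathscr{D}} = \langle f, \psi_{p}\rangle_{\mathscr{P}',\mathscr{P}}$. Applied to $\psi = \varphi e_{-n}$ (whose periodization is $e_{-n}\varphi_{p}$), combined with the Fourier expansion of $f$ and the dual pairing formula $\langle f,\phi\rangle = \sum_{k} f_{k}\phi_{-k}$ recalled in the preceding subsection, this yields the convolution identity
$$c_{n} \;=\; \langle (\varphi f)_{p}, e_{-n}\rangle \;=\; \langle f,\, \varphi e_{-n}\rangle \;=\; \sum_{m\in\Z^d} f_{m}\,\widehat{\varphi}(n-m).$$
Since $\omega$ is polynomially moderate, there exist $C>0$ and a polynomial $\nu$ with $\omega(n) \leq C\,\omega(m)\,\nu(n-m)$, whence
$$|c_{n}|\,\omega(n) \;\leq\; C \sum_{m\in\Z^d} \bigl(|f_{m}|\omega(m)\bigr)\bigl(|\widehat{\varphi}(n-m)|\,\nu(n-m)\bigr).$$
The first factor is in $\ell^{q}(\Z^d)$ by hypothesis, while $\{|\widehat{\varphi}(k)|\nu(k)\}_{k}$ lies in $\ell^{1}(\Z^d)$ because $\widehat{\varphi}$ is rapidly decreasing and $\nu$ grows polynomially. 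Young's inequality $\ell^{q}\ast\ell^{1}\hookrightarrow \ell^{q}$ then gives $(\varphi f)_{p}\in \flq$.

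The main technical point is the rigorous derivation of the convolution identity, requiring justification of the interchange between the Fourier series limit defining $f$ and the pairing against the non-periodic test function $\varphi e_{-n}$; once that is settled, the rest is a weighted Young estimate. The very same convolution bound, applied to arbitrary $x_{0}$ and $\varphi \in \mathscr{D}(I_{1,x_{0}})$, also shows that the family of seminorms defining $\flqloc$ is consistent, as promised in the remark preceding the statement.
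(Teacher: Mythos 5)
Your proof is correct and follows essentially the same route as the paper's: both identify the Fourier coefficients of $(\varphi f)_{p}$ with the convolution $\sum_{m} f_{m}\widehat{\varphi}(n-m)$ (the paper phrases this via $(\varphi f)_{p}=\varphi_{p}f$), then apply the moderateness of $\omega$ together with a Young/Minkowski estimate against the rapidly decreasing weighted sequence $\{|\widehat{\varphi}(k)|\nu(k)\}\in\ell^{1}$. No substantive difference.
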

\begin{proof} Let $f\in \flq$ and $\varphi\in \mathscr{D}(I_{1,x_{0}})$, then $(\varphi f)_{p}=\varphi_{p}f$. Write $f=\sum_{n}{f_n e_n}$ and $\varphi_{p}=\sum_{n}{\varphi_n e_n}\in \mathscr{P}$. By \eqref{moderate} and the generalized Minkowski inequality, we have
\begin{align*}\|\varphi_{p}  f\|_{\flq}&\leq C \left( \sum_n \left( \sum_j |\varphi_{j}| \nu ( j ) | f_{n-j}|  \omega(n-j)\right)^q\right)^{1/q}
\\
&
\leq 
C\|\varphi_{p}\|_{\mathscr{P}l_{\nu}^1} \|f\|_{\flq}<\infty.
\end{align*}

\end{proof}

Set $\omega_s(n)=\left\langle n\right\rangle^s,$ $s\in \mathbb{R}$. For convenience, we write
$\mathscr{P}l_{s}^q:=\mathscr{P}l_{\omega_{s}}^q$ and
$\mathscr{P}l_{s, loc}^q:=\mathscr{P}l_{\omega_{s}, loc}^q$. We clearly have

$$\mathscr{P}=\bigcap_{s\geq 0}{\mathscr{P}l_s^q}= \bigcap _{\omega \in \mathscr P ol(\mathbb{Z}^{d})}\flq \mbox{ and  } \mathscr{P}'=\bigcup_{s\leq 0}{\mathscr{P}l_s^q}=\bigcup _{\omega \in \mathscr P ol(\mathbb{Z}^{d})}\flq\: .$$
Moreover,
$$\mathscr{E}=\bigcap_{s\geq 0}{\mathscr{P}l_{s,loc}^q}= \bigcap _{\omega \in \mathscr P ol(\mathbb{Z}^{d})}\flqloc \mbox{ and  } \mathscr{D}'_F=\bigcup_{s\leq 0}{\mathscr{P}l_{s,loc}^q}=\bigcup _{\omega \in \mathscr P ol(\mathbb{Z}^{d})}\flqloc \:,$$
where $\mathscr E$ is the space of all smooth functions and $\mathscr D'_F$ is the space 
of distributions of finite order on $\R^d.$

\subsection{Multiplication}\label{multiplication}
In this subsection we make some comments about the multiplication of distributions. Assume that the indices $q,q_1,q_2\in[1,\infty]$ are such that $\frac{1}{q_1}+\frac{1}{q_2}=\frac{1}{q}+1$. We fix two weight functions $\omega, \nu \in \mathcal P{ol}(\mathbb{Z}^{d})$ and we assume $\omega$ is $\nu$-moderate (cf. (\ref{moderate})). 

We start with products in the spaces of type $\flq$. Here we define the product via Fourier coefficients. Indeed, let  $f_1=\sum_{n\in\mathbb{Z}^{d}}f_{1,n}e_n\in\mathscr{P}l_{\omega}^{q_1}$ and $f_2=\sum_{n\in\mathbb{Z}^{d}}f_{2,n}e_n\in\mathscr{P}l_{\nu}^{q_2}.$ We define their  product as $f:=f_{1}f_{2}:= \sum_{n\in\mathbb{Z}^d}f_{n}e_{n}$, where 
$$f_n=\sum_{j\in\mathbb{Z}^{d}}f_{1,n-j}f_{2,j}, \:\: n\in\Z^d.$$ 
We will check in Proposition \ref{prop2} that $f\in\mathscr{P}l_{\omega}^{q}$.

The previous definition allows us to introduce multiplication in the local versions of these spaces. In fact, let now $f_1\in\mathscr{P}l_{\omega,loc}^{q_1}$ and $f_{2}\in\mathscr{P}l_{\nu,loc}^{q_2}$. To define their product $f=f_{1}f_{2}$, we proceed locally. Let $x_{0}\in\mathbb{R}^{d}$ and $0<\eta<1$. Let $\phi\in \mathscr{D}(I_{1,x_{0}})$ be such that $\phi(x)=1$ for $x\in I_{\eta,x_{0}}$. We define $f_{I_{\eta,x_{0}}}\in\mathscr{D}'(I_{\eta,x_{0}})$ as the restriction of  $(\phi f_{1})_{p} (\phi f_{2})_{p}$ to $I_{\eta,x_{0}}$. Note that different choices of $\phi$ lead to different Fourier coefficients but, by Proposition \ref{prop1}, we have $f_{I_{\eta,x_{0}}}=f_{I_{\eta',x'_{0}}}$ on  $I_{\eta,x_{0}}\cap I_{\eta',x'_{0}}$. The $\{f_{I_{\eta,x_{0}}}\}$ thus gives rise to a distribution $f\in\mathscr{P}l_{\omega,loc}^{q}$ and we define the product of $f_{1}f_{2}:=f$.

    \begin{proposition}\label{prop2} The mappings 
\begin{equation}\label{mapping1}
\mathscr{P}l_{\omega}^{q_1}\times\mathscr{P}l_{\nu}^{q_2}\ni (f_{1},f_{2})\mapsto f_{1}f_{2} \in \mathscr{P}l_{\omega}^{q}
\end{equation}
and
\begin{equation}\label{mapping2}
\mathscr{P}l_{\omega,loc}^{q_1}\times\mathscr{P}l_{\nu,loc}^{q_2}\ni (f_{1},f_{2})\mapsto f_{1}f_{2} \in\mathscr{P}l_{\omega,loc}^{q}
\end{equation}
are continuous. 
\end{proposition}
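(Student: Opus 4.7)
The plan is to treat the two mappings in turn, deducing the local case from the global one.

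For the global estimate in \eqref{mapping1}, the starting point is the $\nu$-moderateness of $\omega$: for the Fourier coefficients $f_n=\sum_j f_{1,n-j}f_{2,j}$ of the product, \eqref{moderate} yields
\begin{equation*}
|f_n|\,\omega(n)\leq C\sum_{j\in\Z^d}\bigl(|f_{1,n-j}|\,\omega(n-j)\bigr)\bigl(|f_{2,j}|\,\nu(j)\bigr),
\end{equation*}
which exhibits $\{|f_n|\omega(n)\}$ as being dominated by the convolution of the sequences $a=\{|f_{1,n}|\omega(n)\}\in l^{q_1}$ and $b=\{|f_{2,n}|\nu(n)\}\in l^{q_2}$. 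Since $\frac{1}{q_1}+\frac{1}{q_2}=\frac{1}{q}+1$, the discrete Young convolution inequality gives $\|a\ast b\|_{l^q}\leq \|a\|_{l^{q_1}}\|b\|_{l^{q_2}}$, so
\begin{equation*}
\|f_1 f_2\|_{\flq}\leq C\,\|f_1\|_{\mathscr{P}l_{\omega}^{q_1}}\|f_2\|_{\mathscr{P}l_{\nu}^{q_2}}.
\end{equation*}
Continuity of the bilinear map \eqref{mapping1} is then automatic from this estimate.

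For \eqref{mapping2}, I would reduce the question to the global case by a cut-off argument. Fix a seminorm $\|\cdot\|_{x_0,\varphi}$ on $\flqloc$ with $\varphi\in\mathscr{D}(I_{1,x_0})$, and pick an auxiliary $\phi\in\mathscr{D}(I_{1,x_0})$ with $\phi\equiv 1$ on an open neighborhood of $\supp\varphi$. By the consistency property of the local product noted after the definition of $f_{I_{\eta,x_0}}$, one has $\varphi f_1 f_2=\varphi\cdot(\phi f_1)_p(\phi f_2)_p$ as compactly supported distributions in $I_{1,x_0}$, and since $\supp\varphi\subset I_{1,x_0}$ we can periodize to obtain, exactly as in the proof of Proposition \ref{prop1},
\begin{equation*}
(\varphi f_1 f_2)_p=\varphi_p\cdot(\phi f_1)_p(\phi f_2)_p.
\end{equation*}
Now the global estimate from the first part bounds $\|(\phi f_1)_p(\phi f_2)_p\|_{\flq}$ by $C\|f_1\|_{x_0,\phi}\|f_2\|_{x_0,\phi}$, and then the Minkowski-type estimate used in Proposition \ref{prop1} absorbs the smooth periodic factor $\varphi_p$, giving
\begin{equation*}
\|f_1 f_2\|_{x_0,\varphi}\leq C\,\|\varphi_p\|_{\mathscr{P}l_{\nu}^1}\,\|f_1\|_{x_0,\phi}\,\|f_2\|_{x_0,\phi},
\end{equation*}
which is the sought continuity of \eqref{mapping2}.

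The main technical point — and the only real obstacle — lies in justifying that the local product coincides with $\varphi\cdot(\phi f_1)_p(\phi f_2)_p$ on the support of $\varphi$ regardless of the particular cut-off $\phi$, i.e., checking that the construction $f_{I_{\eta,x_0}}$ glues consistently when two different $\phi$'s are chosen over nested intervals. Once this independence is in place (it is what the paragraph preceding the proposition asserts, relying on Proposition \ref{prop1}), the rest is a straightforward combination of Young's convolution inequality and the smoothing effect of $\varphi_p\in\mathscr{P}$.
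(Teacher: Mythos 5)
Your proof is correct and follows essentially the same route as the paper: the global estimate is exactly the paper's combination of the moderateness condition \eqref{moderate} with Young's convolution inequality, and the local case is reduced to the global one by a cut-off argument. The paper simply states that \eqref{mapping2} ``follows at once'' from \eqref{mapping1}; your explicit reduction via $(\varphi f_1 f_2)_p=\varphi_p\cdot(\phi f_1)_p(\phi f_2)_p$ and the estimate from Proposition \ref{prop1} just fills in that step.
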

\begin{proof}
The continuity of (\ref{mapping2}) follows at once from that of (\ref{mapping1}). For (\ref{mapping1}), Young's inequality and (\ref{moderate}) yield 
$$\|f_{1}f_{2}\|_{\flq}\leq  C \|f_{1}\|_{\mathscr{P}l_{\omega}^{q_1}} \|f_{2}\|_{\mathscr{P}l_{\nu}^{q_2}}\:.$$
\end{proof}
In particular, we have:
\begin{corollary}\label{cor1} Let  $s, s_1, s_2\in \R$ be such that 
\begin{equation}
s_1+s_2\geq0,\qquad s\leq\min\{s_1,s_2\}.
\end{equation}
Then, the two mappings $\mathscr{P}l_{s_{1}}^{q_1}\times\mathscr{P}l_{s_{2}}^{q_2}\ni (f_{1},f_{2})\mapsto f_{1}f_{2} \in \mathscr{P}l_{s}^{q}$ and $\mathscr{P}l_{s_{1},loc}^{q_1}\times\mathscr{P}l_{s_{2},loc}^{q_2}\ni (f_{1},f_{2})\mapsto f_{1}f_{2} \in \mathscr{P}l_{s,loc}^{q}$ are continuous.           
\end{corollary}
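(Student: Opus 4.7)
The plan is to derive Corollary \ref{cor1} as a direct specialization of Proposition \ref{prop2}: I will choose the weights $\omega,\nu$ to be powers of $\langle\cdot\rangle$, verify the moderateness condition (\ref{moderate}) for these specific weights, and then compose with the obvious continuous inclusion into $\mathscr{P}l_{s}^{q}$.

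First, by the symmetry of the product I may assume without loss of generality $s_1\leq s_2$. Combined with $s_1+s_2\geq 0$, this yields $s_2\geq \max(s_1,-s_1)=|s_1|$. The key ingredient is then Peetre's inequality
$$\langle m+n\rangle^{s_1}\leq 2^{|s_1|/2}\langle m\rangle^{s_1}\langle n\rangle^{|s_1|},\qquad m,n\in\mathbb{Z}^d,$$
together with the bound $\langle n\rangle^{|s_1|}\leq \langle n\rangle^{s_2}$ (valid because $\langle n\rangle\geq 1$ and $|s_1|\leq s_2$). Combining them gives
$$\langle m+n\rangle^{s_1}\leq 2^{|s_1|/2}\langle m\rangle^{s_1}\langle n\rangle^{s_2},$$
which is exactly (\ref{moderate}) for the choice $\omega=\omega_{s_1}$ and $\nu=\omega_{s_2}$. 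Both weights are manifestly in $\mathcal{P}ol(\mathbb{Z}^d)$, so Proposition \ref{prop2} applies and delivers the continuity of
$$\mathscr{P}l_{s_1}^{q_1}\times\mathscr{P}l_{s_2}^{q_2}\to\mathscr{P}l_{s_1}^{q}$$
and of its local counterpart.

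To finish I compose with the continuous embeddings $\mathscr{P}l_{s_1}^{q}\hookrightarrow \mathscr{P}l_{s}^{q}$ and $\mathscr{P}l_{s_1,loc}^{q}\hookrightarrow \mathscr{P}l_{s,loc}^{q}$, which follow from the pointwise bound $\omega_{s}\leq \omega_{s_1}$ (a consequence of $s\leq s_1$ and $\langle n\rangle\geq 1$) in view of the inclusion relation noted right after the definition of $\mathscr{P}l_{\omega}^{q}$. This yields the two continuous mappings claimed in the corollary.

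I do not foresee a real obstacle: the only substantive calculation is the verification of the moderateness inequality via Peetre, and the rest is routine bookkeeping with inclusions. The one point worth double-checking is that the reduction ``WLOG $s_1\leq s_2$'' is legitimate; this is immediate because both hypothesis and conclusion are symmetric in the pairs $(s_1,q_1)$ and $(s_2,q_2)$ under the commutativity of multiplication.
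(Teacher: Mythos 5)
Your proposal is correct and follows essentially the same route as the paper: reduce by symmetry to the case where one exponent dominates, verify the moderateness condition (\ref{moderate}) for the weights $\omega_{s_1},\omega_{s_2}$ via Peetre's inequality together with $|s_{\min}|\leq s_{\max}$, invoke Proposition \ref{prop2}, and pass to general $s\leq\min\{s_1,s_2\}$ by the continuous inclusion of weighted spaces. Your writeup is in fact slightly more explicit than the paper's (which compresses the Peetre verification and the final embedding into one sentence), but there is no substantive difference.
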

\begin{proof}
We may assume $s_{1}\geq 0$ and $s=s_2$. It is obvious that $s_1\geq\left|s_2\right|$ has to hold in order to have $s_1+s_2\geq0$. The result then follows from Proposition \ref{prop2} upon setting $\omega(n)=\left\langle n\right\rangle^{s_2}$ and 
 $\nu(n)=\left\langle n\right\rangle^{s_1}$ because (\ref{moderate}) holds for them.
\end{proof}

Concerning the local products from Corollary \ref{cor1}, exactly the same method from the proof of Theorem \ref{sobwf} below applies to show that the local space $\mathscr{P}l_{s,loc}^{2}$ coincides with the local Sobolev space $H^{s}_{loc}(\mathbb{R}^{d})$. Therefore, the multiplicative product for the local spaces in Corollary \ref{cor1} agrees with the one defined by H\"{o}rmander in \cite[Sect. 8.2]{Hor2}. Moreover, it is also worth mentioning that our results from the next sections imply that one can go beyond local products and in fact define the multiplicative product by \emph{microlocalization} as in \cite[Sect. 8.3]{Hor2}. We leave the formulation of such definitions to the reader. Theorem \ref{sobwf} below shows that the microlocal version of our multiplication also agrees with H\"{o}rmander's one.

\section{Wave Front}\label{wavefront}

 Our goal in this section is to describe the wave front of $f\in\mathscr{D}^\prime(\R^d)$ via the Fourier series coefficients of the periodic extension of an appropriate localization of 
 $f$ around $x_0\in\R^d$, as explained in the previous section. Recall $(x_0,\xi_0)\notin WF(f)$ 
 if there exist  $\psi\in \mathscr D(\mathbb{R}^{d})$ with $\psi\equiv 1$ in a neighborhood of $x_{0}$ and an open cone $\Gamma\subset\mathbb{R}^{d}$ containing $\xi_0$ such that
\begin{equation}\label{WF}
(\forall N>0)(\exists C_N>0)(\forall \xi\in\Gamma)(|\widehat{f\psi}(\xi)|\leq C_N\langle \xi\rangle^{-N}). 
\end{equation}
The next theorem tells that we can discretize (\ref{WF}):
\begin{theorem}\label{wfth}
Let $f\in\mathscr{D}'(\R^d)$ and $(x_{0},\xi_0)\in\mathbb{R}^{n}\times(\R^d\setminus\{0\})$. The following conditions are equivalent:

$(i)$  There exist  
		 $\phi\in \mathscr D(I_{\varepsilon,x_0})$, with $\varepsilon\in(0,1)$ and $\phi\equiv 1$ in a neighborhood of $x_0$, and an open cone $\Gamma$ containing $\xi_{0}$ such that
\begin{equation}\label{A}
(\forall N\in\N)(\exists C_N>0)(\forall n\in\Gamma\cap\Z^d)
(|\widehat{f\phi}(n)|\leq C_N \langle n\rangle^{-N}).\end{equation}

$(ii)$  $(x_0,\xi_0)\notin WF(f)$.
\end{theorem}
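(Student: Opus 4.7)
The plan hinges on a single structural observation: when $\phi\in\mathscr{D}(I_{\varepsilon,x_0})$ with $\varepsilon<1$, the periodic extension $(f\phi)_{p}$ contains no wrap-around on $\supp\phi$, and integrating against $e_{-n}$ over any unit box reproduces $\int_{\R^d}f\phi\,e_{-n}(x)\,dx=\widehat{f\phi}(n)$. Hence (i) is exactly the statement that the Fourier coefficients of $(f\phi)_{p}$ decay rapidly along $\Gamma\cap\Z^d$. The task is to compare this lattice decay with the continuous cone decay in H\"ormander's definition \eqref{WF}.

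For the direction (ii) $\Rightarrow$ (i), I start with $\psi\in\mathscr{D}(\R^d)$ and an open cone $\Gamma_{0}\ni\xi_{0}$ furnished by $(x_{0},\xi_{0})\notin WF(f)$. Then I choose $\phi\in\mathscr{D}(I_{\varepsilon,x_0})$ with $\phi\equiv 1$ near $x_{0}$ and $\supp\phi\subset\{\psi\equiv 1\}$, so that $f\phi=\phi\cdot(f\psi)$ and $\widehat{f\phi}=\widehat{\phi}\ast\widehat{f\psi}$. A standard cone-splitting of this convolution integral at $\eta\in\Gamma_{0}$ versus $\eta\notin\Gamma_{0}$, using the Schwartz decay of $\widehat{\phi}$ on the off-cone part together with the rapid decay of $\widehat{f\psi}$ on the on-cone part, yields rapid decay of $\widehat{f\phi}(\xi)$ on any smaller open cone $\Gamma\subset\Gamma_{0}$ with $\overline{\Gamma\cap S^{d-1}}\subset\Gamma_{0}$ and $\xi_{0}\in\Gamma$. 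Restriction to $n\in\Gamma\cap\Z^{d}$ gives (i).

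For the substantive direction (i) $\Rightarrow$ (ii), I choose $\psi\in\mathscr{D}(\R^d)$ with $\psi\equiv 1$ near $x_{0}$ and $\supp\psi$ strictly inside the set where $\phi\equiv 1$. Because $\supp\psi\subset I_{1,x_{0}}$, no translate hits $\supp\psi$ in the periodization, so $(f\phi)_{p}=f\phi=f$ on $\supp\psi$, whence $\psi f=\psi\cdot(f\phi)_{p}=\sum_{n\in\Z^{d}}c_{n}\,\psi e_{n}$ with $c_{n}=\widehat{f\phi}(n)$. Fourier transforming gives
\[
\widehat{\psi f}(\xi)=\sum_{n\in\Z^{d}}c_{n}\,\widehat{\psi}(\xi-n).
\]
Fix an open cone $\Gamma_{1}\ni\xi_{0}$ with $\overline{\Gamma_{1}\cap S^{d-1}}\subset\Gamma$. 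I split the sum at $n\in\Gamma$ versus $n\notin\Gamma$: for $n\in\Gamma$ the hypothesis yields $|c_{n}|\leq C_{N}\langle n\rangle^{-N}$ and I combine with the Schwartz decay of $\widehat{\psi}$, using the Peetre-type splitting $\langle\xi\rangle\leq C\max(\langle n\rangle,\langle\xi-n\rangle)$ to extract a factor $\langle\xi\rangle^{-N}$ while retaining summability in $n$; for $n\notin\Gamma$ only the polynomial bound $|c_{n}|\leq C\langle n\rangle^{K}$ (from compact support of $f\phi$) is available, but the geometric fact $|\xi-n|\geq c_{0}(|\xi|+|n|)$ for $\xi\in\Gamma_{1}$, $n\notin\Gamma$, lets the Schwartz decay of $\widehat{\psi}$ absorb this polynomial growth and still produce $\langle\xi\rangle^{-N}$. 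The main obstacle is precisely this lattice-sum estimate: extracting continuous cone decay from a hypothesis restricted to $\Z^{d}$, while balancing the two factors $\langle n\rangle^{-N}$ and $\langle\xi-n\rangle^{-M}$ so that the tail is absolutely summable in $n$ and the decay rate in $\xi$ is arbitrary.
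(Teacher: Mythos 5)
Your proposal is correct, and for the substantive direction $(i)\Rightarrow(ii)$ it takes a genuinely different (more direct) route than the paper. The paper first establishes a uniform discrete estimate over bounded sets $B\subset\mathscr{D}(I_{\varepsilon',x_0})$, namely $\sup_{\varphi\in B}|\widehat{f\varphi}(n)|\leq C'_N\langle n\rangle^{-N}$ for $n\in\Gamma_1\cap\Z^d$, by expanding $\widehat{f\varphi}(n)=\sum_j\widehat{f\phi}(n-j)\widehat{\varphi}(j)$ and splitting at $|j|\leq c|n|$; it then recovers the continuous decay of $\widehat{f\psi}$ by applying this uniform bound to the bounded family of modulations $B=\{\psi e_{-t}:t\in[0,1)^d\}$, since $\widehat{f\psi e_{-t}}(n)=\widehat{f\psi}(n+t)$ sweeps out all of $(\Gamma_1\cap\Z^d)+[0,1)^d$. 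You instead write $\widehat{\psi f}(\xi)=\sum_{n}\widehat{f\phi}(n)\,\widehat{\psi}(\xi-n)$ directly and split the lattice sum at $n\in\Gamma$ versus $n\notin\Gamma$, using Peetre's inequality on the cone part and the separation $|\xi-n|\geq c\max(|\xi|,|n|)$ off the cone. After unwinding the paper's modulation trick (set $\xi=n+t$) the two computations essentially coincide, but your packaging avoids the detour through bounded sets and is arguably more transparent, while the paper's packaging has the advantage that the uniform bounded-set estimate and the modulation device are reused verbatim in the Sobolev case (Theorem \ref{sobwf}). Your two estimates do close: on the cone one needs only $|c_n|\leq C_{N+d+1}\langle n\rangle^{-N-d-1}$ to get summability after extracting $\langle\xi\rangle^{-N}$, and off the cone the choice $M=N+K+d+1$ in the Schwartz bound for $\widehat{\psi}$ absorbs the polynomial growth $\langle n\rangle^{K}$ and still yields $\langle\xi\rangle^{-N}$. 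The only step worth writing out explicitly is the interchange $\widehat{\psi\,(f\phi)_p}(\xi)=\sum_n c_n\widehat{\psi}(\xi-n)$, which follows from the convergence of $\sum_n c_n\psi e_n$ in $\mathscr{E}'$ together with the polynomial bound on the $c_n$; likewise your $(ii)\Rightarrow(i)$ argument is a correct proof of the standard fact that the paper simply quotes.
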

\begin{proof}
 It is well known that, by shrinking the conic neighborhood of $\xi_{0}$, one may choose $\psi$ in (\ref{WF}) with arbitrarily small support around $x_{0}$. Thus, $(ii)$ implies $(i)$. So, it is enough to show that $(i)$ implies $(ii)$. Assume $(i)$. We divide the proof in two steps. We first prove that there are $\varepsilon'$ and an open cone $\xi_{0}\in\Gamma_{1}$ such that
$$
(\forall B \mbox{ bounded set in } \mathscr{D}(I_{\varepsilon',x_{0}}))(\forall N>0)(\exists C'_N>0)
$$
\begin{equation}\label{WFcond}(\forall n\in\Gamma_1\cap \mathbb{Z}^{d})\left(\sup_{\varphi\in B}|\widehat{f\varphi}(n)|\leq\frac{C'_N}{\langle n\rangle^N}\right).
\end{equation}
We choose $\varepsilon'$ in such a way that $\phi(x)=1$ for every $x\in I_{\varepsilon',x_{0}}$. For the cone, we select $\Gamma_{1}$ an open cone with $\xi_{0}\in \Gamma_{1}$ and $\overline{\Gamma}_1\subset\Gamma\cup\left\{0\right\}$. Let us show that (\ref{WFcond}) holds with these choices. Let $0<c<1$ be a constant smaller than the distance between $\partial \Gamma$ and the intersection of $\overline{\Gamma}_{1}$ with the unit sphere. Clearly,  $\left\{y\in\mathbb{R}^{d}: (\exists\xi \in \Gamma_1)(|\xi-y|\leq c|\xi|)\right\}\subset \Gamma$.
Let $B\subset \mathscr D(I_{\varepsilon',x_{0}})$ be a bounded set. We have that 
 $\phi\varphi=\varphi,$ $\forall\varphi\in B$. Moreover, note that $\widehat{f\varphi}(n)$ are precisely the Fourier coefficients of the periodic distribution $(f \phi)_{p}(\varphi)_{p}$. 
Therefore, for $\varphi\in B$ and $n\in \Gamma_1\cap \mathbb{Z}^{d}$,
\begin{align*}
\left|\widehat{f\varphi}(n)\right|
=\left|\sum_{j\in\mathbb{Z}^{d}}\widehat{f\phi}(n-j)\widehat{\varphi}(j) \right|
&
\leq
\left(\sum_{|j|\leq c |n|}+\sum_{|j|> c |n|}\right)|\widehat{f\phi}(n-j)\widehat{\varphi}(j)|
\\
&
=:I_1(n)+I_2(n)
\end{align*}
Further on, 
$$I_1(n)=\sum_{| n-j | \leq c |n |}| \widehat{f \phi} (j) | | \widehat{\varphi} (n-j ) |\leq C \sup_{|n-j|\leq c|n|}| \widehat{f \phi} (j) |,$$
where $C$ only depens on $B$. Since $| n- j | \leq c | n |$ implies $| j | \geq(1-c)| n |,$ 
\begin{equation}\label{1}
\begin{split}
\sup_{\varphi\in B,\: n\in\Gamma_1}\langle n\rangle^N I_1(n) \leq C\sup_{n\in\Gamma_1} \langle n\rangle^N
\sup_{|n-j| \leq c |n | } {| \widehat{f\phi}(j)| } \\
\leq C \sup_{j\in\Gamma_{\xi_0}}{(1-c)^{-N}\langle j\rangle^N |\widehat{f\phi}(j)|}= C(1-c)^{-N}C_N.
\end{split}
\end{equation}
For the estimate of $I_2$ we use that $|n-j|\leq(1+c^{-1})|j|$ if $|j|\geq c |n|$. 
Moreover, by the Paley-Wiener theorem, there are $M, D>0$ such that
$$|\widehat{f\phi}(n-j)|\leq D\langle n-j\rangle^M,\ \ n,j\in\mathbb{Z}^d.$$
Due to the boundedness of $B\subset\mathscr D(\R^d)$,
$$\sup_{\varphi\in B}\sum_{j\in\mathbb{Z}^{d}}\langle j\rangle^{M+N}|\widehat{\varphi}(j)|=:K_{N}<\infty.$$
Thus, for the second term,  we have
\begin{equation}\label{*}
\begin{split}
\sup_{n\in\Gamma_1}\langle n\rangle^N I_2(n) \leq D
\sup_{n\in\Gamma_1}\langle n\rangle^N\sum_{| j |\geq c |n|} \langle n-j\rangle^M |\widehat{\varphi}(j)| \\
\leq Dc^{-N}(1+c^{-1})^M K_{N}, \ \ \forall \varphi\in B.
\end{split}
\end{equation}
Combining (\ref{1}) and (\ref{*}), we get (\ref{WFcond}).
 
We now deduce that $(x_{0},\xi_{0})\notin WF(f)$ with the aid of (\ref{WFcond}). Let $\psi \in\mathscr{D}(I_{\varepsilon',x_{0}})$ be equal to 1 in a neighborhood of $x_{0}$.
Then, the set $B=\{\varphi_{t}:=\psi e_{-t}:\: t\in [0,1)^{d}\}$ is a bounded subset of $\mathscr{D}(I_{\varepsilon',x_{0}})$. So,
$$\sup_{t\in [0,1)^{d}}|\widehat{f\psi}(n+t)|=\sup_{t\in [0,1)^{d}}|\widehat{f\varphi_{t}}(n)|\leq\frac{C'_{N}}{\langle n\rangle^{N}},\ \ \forall n\in\Gamma_1\cap \mathbb{Z}^{d}, $$
that is, 
\begin{equation}\label{br2}
\sup_{\xi\in (\Gamma_1\cap \mathbb{Z}^{d})+[0,1)^{d}}\langle \xi\rangle^{N}|\widehat{f\psi}(\xi)|\leq (1+4d)^{N/2}C'_{N}.
\end{equation}
Select now an open conic neighborhood $\Gamma_{2}$ of $\xi_{0}$ such that $\overline{\Gamma}_{2}\subset \Gamma_{1}\cup\{0\}$ and find $c'$ such that $\left\{y\in\mathbb{R}^{d}: (\exists\xi \in \Gamma_2)(|\xi-y|\leq c'|\xi|)\right\}\subset \Gamma_{1}$. The latter condition implies that $\Gamma_2\cap \{\xi\in\mathbb{R}^{d}:\: |\xi|c'\geq 1\}\subset (\Gamma_1\cap \mathbb{Z}^{d})+[0,1)^{d}$ and hence
$$
\sup_{\xi\in \Gamma_2}\langle \xi\rangle^{N}|\widehat{f\psi}(\xi)|\leq \max\{C''_{N},(1+4d)^{N/2}C'_{N}\}=C_{N}<\infty
,$$
where $C''_{N}=\sup_{\xi\in \Gamma_2,\: |\xi|<1/c'}\langle \xi\rangle^{N}|\widehat{f\psi}(\xi)|$. This shows that $(x_{0},\xi_{0})\notin WF(f)$, as claimed.
\end{proof}

We mention that Theorem \ref{wfth} also follows from the relation between discrete and H\"{o}rmander's wave front sets proved in \cite[Thm. 7.4]{RT}, once one observes that the notion is local and so it does not depend of a particular parametrization.

\section{Sobolev wave front}\label{Sobolev wavefront}
In this section we deal with wave fronts of Sobolev type. We slightly reformulate H\"ormander's definition \cite{Hor2}.
\begin{definition}
Let $f\in\mathscr{D}'(\mathbb{R}^{d})$, $(x_{0},\xi_0)\in\mathbb{R}^{d}\times(\R^d\setminus\{0\})$, and $s\in\mathbb{R}$. We say that $f$ is Sobolev microlocally 
regular at $(x_0,\xi_0)$ of order $s$, that is $(x_0,\xi_0)\notin WF_s(f)$, if there exist an open cone $\Gamma$ around $\xi_0$ and $\psi\in \mathscr D(\mathbb{R}^{d})$ with $\psi\equiv 1$ in a neighborhood of $x_{0}$ such that 
\begin{equation}\label{sob1}
\int_\Gamma{|\widehat{\psi f}(\xi)|^2\langle \xi\rangle^{2s} d\xi<\infty}.
\end{equation}
\end{definition}
We shall now refine Theorem \ref{wfth}:
\begin{theorem}\label{sobwf}
Let $f \in \mathscr{D}^\prime(\R^d)$. The following two conditions are equivalent:

$(i)$ There exist an open cone $\Gamma$ around $\xi_0$, $\phi\in \mathscr D(I_{\eta,x_{0}})$, $\eta\in(0,1),$ with $\phi\equiv 1$ in a neighborhood of $x_{0}$, such that 
\begin{equation}\label{sob2}
\sum_{n\in\Gamma\cap\Z^d}{|a_n|^2\langle n\rangle^{2s}}<\infty, \ \ \ \mbox{where }(f\phi)_{p}=\sum_{n\in\mathbb{Z}^{d}} a_n e_n.
\end{equation}

$(ii)$ $(x_0,\xi_0)\notin WF_s(f).$
\end{theorem}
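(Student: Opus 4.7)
The plan is to reduce Theorem \ref{sobwf} to a single cone-restricted sampling equivalence applied to $g:=f\phi\in\mathscr{E}'(\R^d)$, combined with the same cutoff reduction already employed in the proof of Theorem \ref{wfth}. The key auxiliary statement I would establish is the following \emph{cone sampling lemma}: for every $g\in\mathscr{E}'(\R^d)$ with $\supp g\subset I_\eta$, $\eta<1$, every pair of open cones $\Gamma_1,\Gamma\subset\R^d$ with $\overline{\Gamma}_1\setminus\{0\}\subset\Gamma$, and every $s\in\R$,
\[
\int_\Gamma|\hat g(\xi)|^2\langle\xi\rangle^{2s}\,d\xi<\infty \ \Longrightarrow\ \sum_{n\in\Gamma_1\cap\Z^d}|\hat g(n)|^2\langle n\rangle^{2s}<\infty,
\]
together with the symmetric implication obtained by swapping sum with integral and the roles of $\Gamma,\Gamma_1$.

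Granted this lemma, $(i)\Rightarrow(ii)$ follows immediately from the ``sum to integral'' direction with $g=f\phi$ and the choice $\psi:=\phi$ (legitimate since $\phi\equiv 1$ near $x_0$). For $(ii)\Rightarrow(i)$, I would pick $\phi\in\mathscr{D}(I_{\eta,x_0})$ with $\phi\equiv 1$ near $x_0$ and $\psi\equiv 1$ on $\supp\phi$; then $f\phi=(\psi f)\phi$, so $\widehat{f\phi}=\widehat{\psi f}\ast\hat\phi$. Splitting this convolution at $|\zeta-\xi|\leq c|\xi|$ versus $|\zeta-\xi|>c|\xi|$ as in the proof of Theorem \ref{wfth} (the first piece forces $\zeta\in\Gamma$ for $\xi$ in a slightly smaller cone $\Gamma_0$; the second is killed by the Schwartz decay of $\hat\phi$ against the Paley--Wiener polynomial bound on $\widehat{\psi f}$), and estimating via Cauchy--Schwarz together with the moderate weight bound $\langle\xi\rangle^{2s}\leq C\langle\zeta\rangle^{2s}\langle\xi-\zeta\rangle^{2|s|}$, produces $\int_{\Gamma_0}|\widehat{f\phi}(\xi)|^2\langle\xi\rangle^{2s}\,d\xi<\infty$. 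The ``integral to sum'' half of the lemma then yields (i).

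For the lemma itself, fix $\tilde\chi\in\mathscr{D}(I_1)$ with $\tilde\chi\equiv 1$ on $\supp g$. Two identities drive the argument: the sampling representation
\[
\hat g(\xi)=\sum_{n\in\Z^d}\hat g(n)\,\hat{\tilde\chi}(\xi-n),
\]
coming from $g=\tilde\chi\, g_p$ together with $\widehat{g_p}=\sum_n\hat g(n)\delta_n$, and the dual reproduction formula
\[
\hat g(m)=\int_{\R^d}\hat g(\xi)\,\overline{\hat{\tilde\chi}(\xi-m)}\,d\xi,
\]
obtained from Parseval applied to $g$ and $x\mapsto\tilde\chi(x)e^{2\pi im\cdot x}$. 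For the ``integral to sum'' direction, plug the reproduction formula into $|\hat g(m)|^2$, apply Cauchy--Schwarz with weight $|\hat{\tilde\chi}(\xi-m)|$, and split the $\xi$-integral into $\xi\in\Gamma$ and $\xi\notin\Gamma$; after multiplying by $\langle m\rangle^{2s}$ and summing over $m\in\Gamma_1\cap\Z^d$, the ``$\xi\in\Gamma$'' piece becomes a Schur-type convolution bounded by $\int_\Gamma|\hat g(\xi)|^2\langle\xi\rangle^{2s}d\xi$ (using the moderate weight estimate to move $\langle m\rangle^{2s}$ onto $\langle\xi\rangle^{2s}$), while the ``$\xi\notin\Gamma$'' piece is summable because the strict conic separation gives $|\xi-m|\gtrsim|\xi|+|m|$, so the Schwartz decay of $\hat{\tilde\chi}$ overwhelms the polynomial Paley--Wiener bound $|\hat g|\lesssim\langle\cdot\rangle^M$. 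The ``sum to integral'' direction is entirely symmetric, using the sampling identity in place of the reproduction formula.

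The main obstacle is precisely this cone-restriction bookkeeping: the global equivalence $\sum_n|\hat g(n)|^2\langle n\rangle^{2s}\sim\int|\hat g(\xi)|^2\langle\xi\rangle^{2s}d\xi$ is the familiar fact underlying the identification $\mathscr{P}l_{s,loc}^2=H^s_{loc}$ alluded to after Corollary \ref{cor1}, but its cone-localized refinement requires showing that the off-cone tails are summable or integrable uniformly in $s\in\R$---which is why the strict inclusion $\overline{\Gamma}_1\setminus\{0\}\subset\Gamma$ and the simultaneous interplay of Paley--Wiener growth, Schwartz decay of $\hat{\tilde\chi}$, and the moderate weight estimate are essential.
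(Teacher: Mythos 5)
Your proposal is correct, and it takes a genuinely different route from the paper in the key technical step, namely the passage between cone-restricted sums over $\mathbb{Z}^d$ and cone-restricted integrals. You isolate this as a standalone sampling lemma for a single $g\in\mathscr{E}'(\R^d)$ with $\supp g\subset I_\eta$, proved from the two identities $\hat g(\xi)=\sum_n\hat g(n)\hat{\tilde\chi}(\xi-n)$ (periodization) and $\hat g(m)=\int\hat g(\xi)\overline{\hat{\tilde\chi}(\xi-m)}\,d\xi$ (Parseval reproduction), followed by Cauchy--Schwarz/Schur estimates on the on-cone part and the usual off-cone mechanism (strict cone nesting gives $|\xi-m|\gtrsim|\xi|+|m|$, so the rapid decay of $\hat{\tilde\chi}$ beats the Paley--Wiener growth of $\hat g$). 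The paper instead converts sums to integrals by proving uniform estimates over \emph{bounded families} of cutoffs: for $(i)\Rightarrow(ii)$ it establishes \eqref{sob3} and applies it to the modulated family $\{e_{-t}\psi\}_{t\in[0,1]^d}$, using $\widehat{\psi f}(n+t)=\widehat{e_{-t}\psi f}(n)$; for $(ii)\Rightarrow(i)$ it applies \eqref{sob4} to the family $\{x_je_{-t(\xi)}\phi\}$ to control $\nabla\widehat{\phi f}$ on shifted arguments and then compares $\widehat{\phi f}(n)$ with $\widehat{\phi f}(\xi)$, $\xi\in n+[0,1]^d$, via the mean value inequality. Your scheme buys a cleaner, reusable statement (a microlocal Plancherel--P\'{o}lya-type lemma) and makes $(i)\Rightarrow(ii)$ essentially immediate upon taking $\psi=\phi$; the paper's scheme yields as a by-product the uniform statements \eqref{sob3}--\eqref{sob4}, which mirror the structure of the proof of Theorem \ref{wfth} and avoid introducing the auxiliary window $\tilde\chi$ and the reproduction formula. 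Your cutoff-swapping step in $(ii)\Rightarrow(i)$ (the continuous convolution $\widehat{\psi f}\ast\hat\phi$ split at $|\zeta-\xi|\le c|\xi|$, estimated with Cauchy--Schwarz and Peetre's inequality) is exactly the integral analogue of the paper's treatment of $I_1$ and $I_2$ and is sound. I see no genuine gap.
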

\begin{proof}
$(i)\Rightarrow (ii)$. Assume (\ref{sob2}). Choose an open cone $\Gamma_1$ so that $\overline{\Gamma_1}\subset\Gamma\cup\left\{0\right\}$ and $\xi_{0}\in \Gamma_{1}$.  Find $0<\varepsilon<\eta$ such that $\phi(x)=1$ for all $x\in I_{\varepsilon,x_{0}}$. We first prove that: For every bounded set $B\subset\mathscr{D}(I_{\varepsilon,x_{0}})$ 
			\begin{equation}\label{sob3}
					\sup_{\varphi\in B}\sum_{n\in\Gamma_{1}\cap \mathbb{Z}^{d}}|\widehat{f\varphi}(n)|^2\langle n\rangle^{2s}<\infty.
			\end{equation}
Fix a bounded subset $B\subset\mathscr{D}(I_{\varepsilon,x_{0}})$. In view of the choice of $\varepsilon$, we have that $f\varphi=f\varphi\phi$ and  so
$\widehat{f\varphi}(n)=\sum_{j\in \mathbb{Z}^{d}}a_{j}\widehat{\varphi}(n-j),$ for every $\varphi\in B$.  We fix a constant $0<c<1$ that is smaller than the distance between $\partial \Gamma$ and the intersection of $\overline{\Gamma}_{1}$ with the unit sphere, and also smaller than the distance between $\partial \Gamma_{1}$ and the intersection of $\overline{\mathbb{R}^{d}\setminus\Gamma}$ with the unit sphere.  One has that  $\xi\in\Gamma_1$ and $y\notin\Gamma$ imply $|\xi-y|>c \max{\{|\xi|,|y|\}}$. We keep $\varphi\in B$. By Peetre's inequality, we have
\begin{align*}
\left(\sum_{n\in \Gamma_1\cap \mathbb{Z}^{d}}|\widehat{f\varphi}(n)|^2\langle n\rangle^{2s} \right)^{\frac{1}{2}}
& 
\leq C\left(\sum_{n\in \Gamma_1\cap \mathbb{Z}^{d}} \left(\sum_{j\in \mathbb{Z}^{d}}|a_{j}|\langle j\rangle^s\left|\widehat{\varphi}(n-j)|\langle n-j\right\rangle^{|s|} \right)^2 \right)^{1/2}
\\
&\leq  C(I_1(\varphi)+I_2(\varphi)),
\end{align*}
where
$$I_1(\varphi)=\left(\sum_{n\in \Gamma_1\cap \mathbb{Z}^{d}} \left(\sum_{j\in\Gamma\cap\mathbb{Z}^{d}}|a_{j}|\langle j\rangle^s\left|\widehat{\varphi}(n-j)|\langle n-j\right\rangle^{|s|} \right)^2 \right)^{1/2}$$
and
$$I_2(\varphi)=\left(\sum_{n\in \Gamma_1\cap \mathbb{Z}^{d}} \left(\sum_{j\notin\Gamma\cap \mathbb{Z}^{d}}|a_{j}|\langle j\rangle^s\left|\widehat{\varphi}(n-j)|\langle n-j\right\rangle^{|s|} \right)^2 \right)^{1/2}.$$
By Young's inequality and the fact that $B$ is a bounded set,
$$
\sup_{\varphi\in B}I_{1}(\varphi)\leq \left(\sum_{n\in\Gamma\cap\Z^d}{|a_n|^2\langle n\rangle^{2s}}\right)^{1/2}\sup_{\varphi\in B} \sum_{n\in \mathbb{Z}^{d}}|\varphi(n)|\langle n\rangle^{|s|}<\infty.
$$
We now estimate $I_2(\varphi)$. Since $f\phi$ is compactly supported,  
$\langle j \rangle^{s}|a_{j}|=\langle j \rangle^{s}|\widehat{f\phi}(j)|\leq D\langle j\rangle^k,$ $\forall n\in\mathbb{Z}^d,$ for some $D>0$ and $k>0$.
The fact that $B$ is bounded yields the existence of $C'>0$ such that 
$|\widehat{\varphi}(j)|\leq C'\langle j \rangle^{-k-|s|-3(d+1)/2}.$ Because of the choice of $\Gamma_1$, we have
\begin{align*}\sup_{\varphi\in B}(I_2(\varphi))^{2}&\leq (DC')^{2}\sum_{n\in \Gamma_1\cap \mathbb{Z}^{d}} \left(\sum_{j\notin\Gamma\cap \mathbb{Z}^{d}}\langle j\rangle^{k}\langle n-j\rangle^{-k-3(d+1)/2} \right)^2
\\
&
\leq 
(DC')^{2}c^{-2k-3(d+1)}\sum_{n\in \Gamma_1\cap \mathbb{Z}^{d}}\langle n\rangle^{-d-1}\left(\sum_{j\notin\Gamma\cap \mathbb{Z}^{d}}\langle j\rangle^{-d-1} \right)^2.
\end{align*}
Thus $(\ref{sob3})$ has been established. We now deduce $(ii)$ from (\ref{sob3}). Once again we shrink the conic neighborhood of $\xi_{0}$. So let $\Gamma_2$ be an open cone such that $\overline{\Gamma_2}\subset \Gamma_{1}\cup\left\{0\right\}$ and $\xi_{0}\in \Gamma_{2}$. Let $\psi \in\mathscr{D}(I_{\varepsilon,x_{0}})$ be equal to 1 in a neighborhood of $x_{0}$. Find $r>0$ such that $\Gamma_2\cap \{\xi\in\mathbb{R}^{d}:\: |\xi|\geq r\}\subset (\Gamma_1\cap \mathbb{Z}^{d})+[0,1)^{d}$.
For each $n\in \Gamma_{1}\cap \mathbb{Z}^{d}$, write $\Lambda_n= n+[0,1]^{d}$. Then, by (\ref{sob3}) and Peetre's inequality,
\begin{align*}
\underset{|\xi|\geq r}{\int_{\xi\in\Gamma_{2}} }|\widehat{\psi f}(\xi)|^2\langle \xi\rangle^{2s} d\xi &\leq C \sum_{n\in \Gamma_{1}\cap\mathbb{Z}^{d}}\langle n\rangle^{2s} \int_{\Lambda_{n}} |\widehat{\psi f}(\xi)|^2d\xi
\\
&
=C\int_{[0,1]^{d}} \sum_{n\in \Gamma_{1}\cap\mathbb{Z}^{d}}\langle n\rangle^{2s} |\widehat{\psi f}(n+t)|^2dt
\\
&
\leq C\sup_{t\in[0,1]^{d}}\sum_{n\in \Gamma_{1}\cap\mathbb{Z}^{d}}\langle n\rangle^{2s} |\widehat{e_{-t}\psi f}(n)|^2<\infty.
\end{align*}
Therefore, $(x_{0},\xi_{0})\notin WF(f)$.

$(ii)\Rightarrow (i)$. A variant of the argument employed above, but with integrals instead of sums, applies to show that $(x_0,\xi_{0})\notin WF(f)$ implies the following property: There exist an open cone $\Gamma$ and  $\varepsilon\in(0,1)$ such that for every bounded set $B\subset\mathscr{D}(I_{\varepsilon,x_{0}})$ 
			\begin{equation}\label{sob4}
					\sup_{\psi\in B}\int_{\Gamma}|\widehat{f\psi}(\xi)|^2\langle \xi\rangle^{2s} d\xi<\infty.
			\end{equation}
We leave such details to the reader. So, assume that (\ref{sob4}) holds. 
Let $\Gamma_{1}$ be an open cone contaning $\xi_{0}$ such that  $\overline{\Gamma}_{1}\subset \Gamma \cup \{0\}$. Then, there is $r>0$ such that $(\Gamma_{1}+[0,1]^{d})\cap\{\xi\in\mathbb{R}^{d}:\: |\xi|\geq r\}\subset \Gamma.$ Let $\phi\in\mathscr{D}(I_{\varepsilon,x_{0}})$ such that $\phi\equiv1$ in a neighborhood of $x_{0}$. Consider a measurable function $t:\Gamma\to [0,1]^{d}$. Taking the bounded set $B=\{\psi_{i,t}\in \mathscr{D}(I_{\varepsilon,x_{0}}): \: \psi_{j,t}(x)=x_{j}e^{-2\pi i x\cdot t(\xi)}\phi(x),\: \xi\in \Gamma\, , j=1,\dots, d\}$ in (\ref{sob4}), we obtain that there is a constant $C>0$ such that 
\begin{equation}\label{sob5}
\int_{\Gamma}|\nabla(\widehat{\phi f})(\xi+t(\xi))|^2\langle \xi\rangle^{2s} d\xi< C.
\end{equation}
The constant $C$ is actually independent of $t$.
For each $n\in \Gamma_{1}\cap \mathbb{Z}^{d}$, let $\Lambda_n$ be the unite cube $\prod_{j=1}^d[n_{j},n_{j}+1]= n+[0,1]^{d}$. Note that $\Lambda_{n}\subset \Gamma$ if $|n|\geq r$.
Then 
$$\left( \sum_{n\in \Gamma_{1}\cap \mathbb{Z}^{d}}^\infty|\widehat{\phi f}(n)|^2\langle n \rangle^{2s}\right)^{1/2}=\left(\sum_{n\in \Gamma_{1}\cap \mathbb{Z}^{d}}^\infty\int_{\Lambda_n}{|\widehat{\phi f}(n)|^2\langle n \rangle^{2s}}d\xi\right)^{1/2}\leq I_{1}^{1/2}+I_{2}^{1/2},
$$
where $I_{1}:=\sum_{n\in \Gamma_{1}\cap \mathbb{Z}^{d}}^\infty\int_{\Lambda_n}{|\widehat{\phi f}(n)-\widehat{\phi f}(\xi)|^2\langle n \rangle^{2s}} d\xi$ and 
\begin{align*}I_2&:=
\sum_{n\in \Gamma_{1}\cap \mathbb{Z}^{d}}^\infty\int_{\Lambda_n}|\widehat{\phi f}(\xi)|^2\langle n \rangle^{2s}d\xi 
\\
&
\leq     
\sum_{|n|\leq r}^\infty\int_{\Lambda_n}|\widehat{\phi f}(\xi)|^2\langle n \rangle^{2s}d\xi +
C' \int_\Gamma{|\widehat{\phi f}(\xi)|^2\langle \xi\rangle^{2s}}d\xi<\infty.
\end{align*}
It remains to show that $I_{1}$ is finite. Given $\theta>0$, define $t_{\theta}:\Gamma\to [0,1]^{d}$ as
$$
t_{\theta}(\xi)=
\begin{cases}
\theta (n-\xi) & \mbox{ if }\xi\in\Lambda_{n} \mbox{ and } |\xi|\geq r, \\
0 & \mbox{otherwise} .
\end{cases} 
$$
We now make use of (\ref{sob5}). Since
$$|\widehat{\phi f}(\xi)-\widehat{\phi f}(n)|^{2}\leq |n-\xi| \int_{0}^{1}|\nabla(\widehat{\phi f})(\xi+\theta (n-\xi))|^{2}d\theta,
$$ 
we have
\begin{align*}
I_1 \leq &
\underset{n\in \Gamma_{1}\cap \mathbb{Z}^{d}}{\sum_{|n|\leq r}^\infty}\int_{\Lambda_n}{|\widehat{\phi f}(n)-\widehat{\phi f}(\xi)|^2\langle n \rangle^{2s}} d\xi
\\
&+ C'\sup_{\theta\in[0,1]}
\int_\Gamma{|\nabla(\widehat{\phi f})(\xi+t_{\theta}(\xi))|^2\langle \xi\rangle^{2s}}d\xi<\infty.
\end{align*}
This completes the proof.
\end{proof}

\end{document}